\documentclass[11pt,letterpaper,reqno]{amsart}
\usepackage{fullpage}
\usepackage{amsmath,amsthm,amsfonts,amssymb,amscd}
\usepackage{empheq}
\usepackage{caption}
\usepackage{lipsum}
\usepackage{lastpage}
\usepackage{thmtools}
\usepackage{enumerate}
\usepackage[shortlabels]{enumitem}
\usepackage{fancyhdr}
\usepackage{mathrsfs}
\usepackage{xcolor}
\usepackage{graphicx}
\usepackage{listings}
\usepackage{bbm}
\usepackage{array}
\usepackage{hyperref}
\usepackage[makeroom]{cancel}
\usepackage[utf8]{inputenc}
\usepackage{comment}
\usepackage{makecell}

\usepackage{bbm}
\usepackage{stmaryrd}

\textwidth = 6.2 in 
\textheight = 8.6 in 
\oddsidemargin = .1 in 
\evensidemargin = .1 in 
\topmargin = 0.1 in
\headheight = 0.1 in
\headsep = 0.2 in
\parskip = 0.0in
\parindent = 0.2in

\mathtoolsset{showonlyrefs=true}
\numberwithin{figure}{section}
\numberwithin{table}{section}

\hypersetup{%
  colorlinks=true,
  linkcolor=blue,
  citecolor=blue,
  linkbordercolor={0 0 1}
}

\setlength{\parskip}{.05in}

\allowdisplaybreaks


\newtheorem{theorem}{Theorem}[section]
\newtheorem{corollary}[theorem]{Corollary}
\newtheorem{lemma}[theorem]{Lemma}

\mathtoolsset{showonlyrefs=true}
\numberwithin{equation}{section}

\setlength{\evensidemargin}{0.1in}
\setlength{\oddsidemargin}{0.1in}
\setlength{\textwidth}{6.4in}

\setlength{\topmargin}{0mm}
\setlength{\textheight}{8.5in}

\newcommand{\lrbb}[1]{\left\llbracket#1\right\rrbracket}

\newcommand{\lrp}[1]{\left(#1\right)}

\newcommand{\condf}{{\mathfrak{f}}}
\newcommand{\midmid}{\parallel}

\newcommand{\nw}{{\textnormal{new}}}

\DeclareMathOperator{\QQ}{\mathbb{Q}}

\DeclareMathOperator{\Tr}{Tr}

\newcommand{\T}{\widehat T}

\setlist[enumerate]{leftmargin=*,widest=0}
\setlist[itemize]{leftmargin=*,widest=0}
\makeatletter
\def\subsection{\@startsection{subsection}{2}%
  \z@{.5\linespacing\@plus.7\linespacing}{.3\linespacing}%
  {\normalfont\bfseries}
}


\begin{document}

\title{Hecke Eigenvalue Equidistribution over the Newspaces with Nebentypus}

\subjclass[2020]{Primary 11F11; Secondary 11F25, 11F72.}
\keywords{Modular forms; Hecke operators; Newspace; Nebentypus; Equidistribution}

\author[E. Ross]{Erick Ross}
\address[E. Ross]{School of Mathematical and Statistical Sciences, Clemson University, Clemson, SC, 29634}
\email{erickjohnross@gmail.com}

\begin{abstract}
    Fix a prime $p$, and let $\widehat T_p^{\mathrm{new}}(N,k,\chi) := \chi(p)^{-1/2} p^{-(k-1)/2} T_p^{\mathrm{new}}(N,k,\chi)$ denote the normalized $p$'th Hecke operator over the newspace with nenbentypus $S_k^{\mathrm{new}}(N,\chi)$. In this paper, we determine the distribution of the eigenvalues of $\widehat T_p^{\mathrm{new}}(N,k,\chi)$ as $N+k \to \infty$.  
\end{abstract}

\maketitle

\section{Introduction} \label{sec:intro}
For $N \ge 1$ and $k\ge 2$ even, let $S_k(N)$ denote the space of cuspidal modular forms of weight $k$ and modular group $\Gamma_0(N)$. Additionally, let $S_k^\nw(N) \subseteq S_k(N)$ denote its new subspace. For primes $p$, let $\T_p(N,k) := p^{-(k-1)/2} \,T_p(N,k)$ and $\T_p^\nw(N,k) := p^{-(k-1)/2} \T^\nw_{p}(N,k)$ denote the $p$'th normalized Hecke operators over $S_k(N)$ and $S_k^\nw(N)$, respectively. 

The asymptotic behavior of the eigenvalues of these Hecke operators has been studied in many different settings. From one asymptotic perspective, one can fix $N$ and $k$, and ask about the behavior of the eigenvalues of $\T^\nw_p(N,k)$ as $p \to \infty$. Here, the eigenvalues of $\T^\nw_p(N,k)$ correspond with the normalized Fourier coefficients $\widehat a_f(p)$ of newforms $f = \sum_{m\ge1} m^{(k-1)/2} \widehat a_f(m) q^m \in S_k^\nw(N)$. And the asymptotic behavior of these Fourier coefficients has been an important area of research in Number Theory for many years. The Sato-Tate conjecture (now a theorem \cite{BGHT}), for example, states that the prime-indexed Fourier coefficients $\widehat a_f(p)$ of a non-CM newform $f$ tend to the Sato-Tate distribution $\mu_\infty$ as $p \to \infty$. Similarly, it is known that the prime-indexed Fourier coefficients $\widehat a_f(p)$ of a CM newform tend to the CM distribution $\mu_{\text{CM}}$ as $p \to \infty$ \cite[Theorem 15.4]{harmonic-maass-forms}.

From a slightly different perspective, one can fix $p$ and ask about the behavior of the eigenvalues of $\T^\nw_p(N,k)$ as $N+k \to \infty$. From this perspective, Serre showed that the eigenvalues of $\T_p(N,k)$ and $\T_p^\nw(N,k)$ tend to yet another distribution $\mu_p$ as $N+k \to \infty$ \cite[Theorems 1, 2]{serre}. 

A natural question one might then ask is if the same result holds over the spaces with nebentypus $S_k(N,\chi)$ and $S_k^\nw(N,\chi)$. In fact, Serre noted that his result (with the same distribution $\mu_p$) could be extended to the full spaces with nebentypus $S_k(N,\chi)$ via an identical strategy \cite[Theorem 4]{serre}. However, \cite{serre} did not contain the tools to show the corresponding result over the newspaces with nebentypus $S_k^\nw(N,\chi)$.
In this paper, we address this case by determining the distribution of the eigenvalues of $\T_p^\nw(N,k,\chi)$.

One might guess that Serre's $\mu_p$-equidistribution result should still hold over $S_k^\nw(N,\chi)$. However, this turns out to not be true; the eigenvalues of $\T_p^\nw(N,k,\chi)$ are not guaranteed to tend to a distribution (albeit for a trivial reason explained shortly). But when these eigenvalues do tend to an asymptotic distribution, they will tend to $\mu_p$, like for the full space $S_k(N,\chi)$. 

In particular, we show the following result. Here, and throughout this paper, $\condf(\chi)$ denotes the conductor of $\chi$.
\begin{theorem} \label{thm:main-thm}
    Fix a prime $p$, and let $\displaystyle \mu_p(x) := \frac{p+1}{\pi} \frac{(1-x^2/4)^{1/2}}{(p^{1/2} + p^{-1/2})^2-x^2} dx $. Consider $N \ge 1$ coprime to $p$, $k \ge 2$, and $\chi$ Dirichlet characters modulo $N$ such that $\chi(-1) = (-1)^k$. 
    Then assuming it is not the case that $2 \mid \condf(\chi),\ 2 \midmid N/\condf(\chi)$; the eigenvalues of $\T_p^\nw(N,k,\chi)$ are $\mu_p$-equidistributed over $[-2,2]$ as $N+k \to \infty$.
\end{theorem}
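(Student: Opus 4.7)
The plan is to establish the result via the moment method, reducing to Serre's full-space equidistribution theorem (his Theorem 4) through a Möbius inversion on the divisor lattice of $N/\condf(\chi)$. For a normalized newform $f \in S_k^\nw(N,\chi)$, the Hecke recurrence at $p$ forces $\widehat{a}_f(p^n) = U_n(\widehat{a}_f(p)/2)$, where $U_n$ is the Chebyshev polynomial of the second kind. Consequently $\Tr \T_{p^n}^\nw(N,k,\chi) = \sum_f U_n(\widehat{a}_f(p)/2)$, so by Weyl's criterion and the density of $\{U_n(x/2)\}_{n \ge 0}$ in $C[-2,2]$, the task reduces to showing, for each fixed $n \ge 0$,
$$\frac{\Tr \T_{p^n}^\nw(N,k,\chi)}{\dim S_k^\nw(N,\chi)} \longrightarrow m_n := \int_{-2}^{2} U_n(x/2) \, d\mu_p(x)$$
as $N+k \to \infty$ under the hypothesis.

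Next, I would relate the newspace trace to full-space traces. Under the hypothesis, the Atkin-Lehner-Li new-old decomposition
$$S_k(N,\chi) = \bigoplus_{\condf(\chi) \mid M \mid N} \bigoplus_{d \mid N/M} \iota_d\, S_k^\nw(M,\chi)$$
holds, where $\iota_d \colon f(z) \mapsto f(dz)$ is the degeneracy map. Since $\iota_d$ is Hecke-equivariant at $p \nmid N$, Möbius inversion yields
$$\Tr \T_{p^n}^\nw(N,k,\chi) = \sum_{\condf(\chi) \mid M \mid N} \beta(N/M) \, \Tr \T_{p^n}(M,k,\chi),$$
with $\beta := \mu * \mu$ multiplicative and supported on prime-power exponents $\{0,1,2\}$. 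Setting $n=0$ gives the analogous dimension identity. Serre's full-space theorem then provides
$$\Tr \T_{p^n}(M,k,\chi) = m_n \cdot \dim S_k(M,\chi) + O_{n,p}\bigl(k \, M^{1/2+\epsilon}\bigr),$$
with the error arising from the elliptic and hyperbolic terms of the Eichler-Selberg trace formula.

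Combining these, the ratio of interest equals $m_n$ plus a weighted error of order $O(k \, N^{1/2+\epsilon}) / \dim S_k^\nw(N,\chi)$, using the support and size bounds on $\beta$. The remaining task is to prove a matching lower bound $\dim S_k^\nw(N,\chi) \gg k \, N^{1-\epsilon}$ as $N+k \to \infty$ under the hypothesis, uniformly in $\chi$. This is plausible because the $M = N$ term dominates the Möbius sum for the dimension, and the cancellation among the remaining terms is controlled by the tiny support of $\beta$.

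The principal obstacle is exactly this dimension lower bound and its interaction with the excluded case. When $2 \mid \condf(\chi)$ and $2 \midmid N/\condf(\chi)$, the degeneracy maps at the prime $2$ fail to be linearly independent---a phenomenon traceable to the local representation theory of $\operatorname{GL}_2(\QQ_2)$ with even-conductor central character---causing the Möbius identity above to over-count and the newspace dimension to display systematic cancellation (often vanishing outright). A careful local analysis at each prime dividing $N/\condf(\chi)$, especially at $p = 2$, will be needed to secure the dimension lower bound outside the exceptional locus and to confirm that the stated hypothesis is sharp.
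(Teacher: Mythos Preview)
Your plan follows the same skeleton as the paper: reduce $\mu_p$-equidistribution, via the Chebyshev polynomials $X_n(x)=U_n(x/2)$ and the Hecke recurrence, to showing $\Tr \T_{p^n}^\nw/\Tr \T_1^\nw \to \mathbbm{1}_{2\mid n}\,p^{-n/2}$, and then control the newspace traces by a main term of size $\asymp (k-1)\,N^{1-\varepsilon}$ plus an $O(N^{1/2+\varepsilon})$ error, together with the dimension lower bound away from the exceptional locus. The only organizational difference is that the paper quotes the newspace trace estimate and the bound $\psi_{\condf(\chi)}^\nw(N)\ge N/4^{\omega(N)}$ as a black box, whereas you propose to rederive them by the M\"obius inversion $\beta=\mu*\mu$ against Serre's full-space traces; that is precisely how such newspace estimates are obtained, so the substance is the same.

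There is, however, one genuine slip that would break your argument as written. Your full-space error $O_{n,p}(k\,M^{1/2+\varepsilon})$ carries a spurious factor of $k$. In the Eichler--Selberg formula the weight enters the non-identity terms only through $(\rho^{k-1}-\bar\rho^{k-1})/(\rho-\bar\rho)$ with $|\rho|=m^{1/2}$, which is $O(1)$ after the normalization by $m^{-(k-1)/2}$; the correct error is $O_{n,p}(M^{1/2+\varepsilon})$, uniformly in $k$. With your stated bound, for fixed $N$ and $k\to\infty$ the quotient is only $m_n+O_N(1)$, not $m_n+o(1)$, so the weight-aspect limit fails. Drop the $k$ and everything goes through. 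A smaller point: your explanation of the excluded case is not quite right. The new--old decomposition and the M\"obius identity hold unconditionally; what happens at $2\mid\condf(\chi)$, $2\parallel N/\condf(\chi)$ is simply that the local factor of $\psi_{\condf(\chi)}^\nw$ at $2$ vanishes, so $S_k^\nw(N,\chi)=0$ and there is nothing to equidistribute.
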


We note that when $2 \mid \condf(\chi),\ 2 \midmid N/\condf(\chi)$, the eigenvalues of $\T_p^\nw(N,k,\chi)$ do not tend to a distribution for a trivial reason. In this case, it turns out that $\dim S_k^\nw(N,k,\chi) = 0$ \cite[Proposition 6.1]{ross}, so in fact, there are no eigenvalues of $\T_p^\nw(N,k,\chi)$. 
However, if we exclude the case of $2 \mid \condf(\chi),\ 2 \midmid N/\condf(\chi)$, then it turns out that $\dim S_k^\nw(N,k,\chi) \to \infty$ as $N+k \to \infty$ \cite[Theorem 1.3]{ross}. Hence after excluding this exceptional case, it is perfectly well-defined to ask about the distribution of the eigenvalues of $\T_p(N,k,\chi)$ as $N+k \to \infty$.

Finally, in Section \ref{sec:application}, we will discuss an application of Theorem \ref{thm:main-thm}. In particular, one can use the equidistribution result from Theorem \ref{thm:main-thm} to obtain information about the coefficient fields for newforms in $S_k^\nw(N,\chi)$.

Most of the technical details underlying the techniques used in this paper have already been worked out in other contexts (both by the author, and by others).
In particular, we rely heavily on two key pieces. 
First, Lemma \ref{lemma:serre-characterization} gives a certain characterization of $\mu_p$-equidistribution from \cite{serre} (following the same outline that Serre used to show his original $\mu_p$-equidistribution result).
Second, Lemma \ref{lemma:trace-estimate} gives Hecke operator trace estimates that were computed in \cite{ross} and \cite{cason-et-al}.  

\section{Proof of the Main Theorem}
In this section, we prove the main theorem. Define $\T_m^\nw(N,k,\chi)$ via the normalization $\T_m^\nw(N,k) := \chi(m)^{-1/2} m^{-(k-1)/2} \,T_m(N,k,\chi)$ (taking, for example, the principal branch of $\chi(p)^{-1/2}$ for primes $p$). 
We are using this normalization so that the eigenvalues of $\T_p^\nw(N,k,\chi)$ all lie in the real interval $[-2,2]$.

Now, Theorem \ref{thm:main-thm} claims that the eigenvalues of $\T_p^\nw(N,k,\chi)$ are $\mu_p$-equidistributed over $[-2,2]$ as $N+k \to \infty$. Specifically, this means that 
\begin{align} \label{eqn:equid-def} 
    \frac{1}{\dim S_k^\nw(N,\chi)} \sum_{\lambda \in \mathrm{eigv}\, \T_p(N,k,\chi)} g(\lambda) &\longrightarrow \int_{-2}^2 g(x) \mu_p(x) \quad\text{as}\quad N+k\to \infty
\end{align}
for all $g \in C[-2,2]$ (the real-valued continuous functions over $[-2,2]$).

To prove this result we use the following characterization from Serre.
\begin{lemma}[{\cite[Proposition 2]{serre}}] \label{lemma:serre-characterization}
    To show \eqref{eqn:equid-def} for all $g \in C[-2,2]$, it suffices to just show \eqref{eqn:equid-def} for the $g$ in any sequence $\{g_n\}_{n\ge0}$ of polynomials with $n = \deg g_n$.
\end{lemma}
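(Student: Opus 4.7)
The plan is the standard Weierstrass-approximation-plus-linearity strategy. I would proceed in two stages: first upgrade the hypothesis from the particular sequence $\{g_n\}$ to all polynomials, and then extend from polynomials to $C[-2,2]$.

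For the first stage, the degree condition $n = \deg g_n$ guarantees that $\{g_0, g_1, \dots, g_d\}$ is a basis for the space of real polynomials of degree at most $d$, for every $d \ge 0$. Hence every polynomial can be written as a finite $\RR$-linear combination of the $g_n$. Since both sides of \eqref{eqn:equid-def} are linear in $g$, validity of \eqref{eqn:equid-def} for each $g_n$ immediately implies validity for every polynomial.

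For the second stage, fix $g \in C[-2,2]$ and $\epsilon > 0$. By the Weierstrass approximation theorem, there is a polynomial $P$ with $\|g - P\|_\infty < \epsilon$. Since the eigenvalues of $\T_p^\nw(N,k,\chi)$ lie in $[-2,2]$ and $\mu_p$ is a probability measure supported on $[-2,2]$, both the average on the left-hand side of \eqref{eqn:equid-def} and the integral on the right-hand side are bounded in absolute value by $\|g\|_\infty$. Applying this bound to the continuous function $g - P$ and using the triangle inequality yields
\[ \left| \frac{1}{\dim S_k^\nw(N,\chi)} \sum_{\lambda} g(\lambda) - \int_{-2}^2 g(x)\,\mu_p(x) \right| \le 2\epsilon + \left| \frac{1}{\dim S_k^\nw(N,\chi)} \sum_{\lambda} P(\lambda) - \int_{-2}^2 P(x)\,\mu_p(x) \right|. \]
By the first stage, the second term on the right tends to $0$ as $N+k \to \infty$, so the limsup of the left-hand side is at most $2\epsilon$. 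Since $\epsilon$ was arbitrary, this establishes \eqref{eqn:equid-def} for $g$.

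There is no substantive obstacle: the only inputs beyond Weierstrass are that $\mu_p$ has total mass $1$ (immediate from its definition as the Plancherel measure) and that the eigenvalues sit in $[-2,2]$ (which is exactly what the chosen normalization $\T_p^\nw := \chi(p)^{-1/2} p^{-(k-1)/2} T_p^\nw$ arranges). The lemma is a purely formal reduction, recorded so that the main argument only needs to verify \eqref{eqn:equid-def} for a convenient polynomial basis --- presumably one tailored to the trace estimates of Lemma \ref{lemma:trace-estimate}.
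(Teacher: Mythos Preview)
Your proposal is correct and follows essentially the same two-step approach as the paper: first extend to all polynomials by linearity (using that the $g_n$ form a basis), then pass to $C[-2,2]$ via the density of polynomials in the supremum norm. The paper's proof is terser, but your explicit $\epsilon$-argument with the triangle inequality is exactly the content hidden behind the paper's one-line appeal to density.
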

\begin{proof}
    Assume that \eqref{eqn:equid-def} holds for the polynomials $g \in \{g_n\}_{n\ge0}$.
    First, observe that \eqref{eqn:equid-def} also holds for all polynomials $g$ (since the $\{g_n\}_{n\ge 0}$ form a basis for the set of all polynomials). Then since the set of polynomials is dense in $C[-2,2]$ (via the supremum norm), we furthermore have that \eqref{eqn:equid-def} holds for all $g \in C[-2,2]$, yielding the desired result.
\end{proof}

We will then apply this lemma using the Chebyshev polynomials $X_n(x) := U_n(x/2)$. In particular, to show Theorem \ref{thm:main-thm}, it suffices to show the following identity:
\begin{align} \label{eqn:suff-cond} \tag{$*$}
    \frac{1}{\dim S_k^\nw(N,\chi)} \sum_{\lambda \in \mathrm{eigv}\, \T_p(N,k,\chi)} X_n(\lambda) &\longrightarrow \int_{-2}^2 X_n(x) \mu_p(x) \quad\text{as}\quad N+k\to \infty.
\end{align}

First, we evaluate the RHS of \eqref{eqn:suff-cond}. The value of this integral was already computed in \cite[Equation (20)]{serre}, but we repeat a proof here for convenience of the reader.
\begin{lemma}
    For each $n \ge 0$,
    \begin{align}
        \int_{-2}^2 X_n(x) \mu_p(x) = \mathbbm{1}_{2|n} \,p^{-n/2}.
    \end{align}
\end{lemma}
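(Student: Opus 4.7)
The plan is to reduce the integral to the standard orthogonality of the Chebyshev polynomials of the second kind, rather than computing it directly. The key observation is that the denominator of $\mu_p(x)$ admits a very clean expansion in the family $\{X_{2k}\}_{k\ge 0}$, after which the result falls out of orthogonality term by term.

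Step 1 (Expand the weight). I would first establish the identity
\begin{align}
\frac{p+1}{(p^{1/2}+p^{-1/2})^2 - x^2} = \sum_{k \ge 0} X_{2k}(x)\, p^{-k}.
\end{align}
This comes from the standard Chebyshev generating function $\sum_{n \ge 0} U_n(y)\, t^n = (1 - 2yt + t^2)^{-1}$. Specializing to $y = x/2$ and $t = \pm p^{-1/2}$ and averaging kills the odd-degree terms, leaving $\sum_{k \ge 0} 2\, X_{2k}(x)\, p^{-k}$ on the one hand and $\frac{1}{1 - xp^{-1/2} + p^{-1}} + \frac{1}{1 + xp^{-1/2} + p^{-1}}$ on the other; clearing denominators in the latter gives $\frac{2(p+1)}{(p^{1/2}+p^{-1/2})^2 - x^2}$.

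Step 2 (Orthogonality). Using the substitution $x = 2\cos\theta$ and the identity $U_n(\cos\theta) = \sin((n+1)\theta)/\sin\theta$, I would reduce
\begin{align}
\int_{-2}^2 X_n(x)\, X_m(x)\, (1-x^2/4)^{1/2}\, dx = \pi\, \delta_{nm}
\end{align}
to the standard orthogonality of $\{\sin((n+1)\theta)\}_{n \ge 0}$ on $[0,\pi]$.

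Step 3 (Combine). Writing $\mu_p(x)\,dx = \frac{1}{\pi}(1-x^2/4)^{1/2}\, \bigl(\sum_{k \ge 0} X_{2k}(x)\, p^{-k}\bigr)\, dx$ by Step 1, interchanging sum and integral (justified by uniform convergence on $[-2,2]$, since $|X_{2k}(x)| \le 2k+1$ there and $\sum_k (2k+1) p^{-k} < \infty$), and applying Step 2 term by term yields
\begin{align}
\int_{-2}^2 X_n(x)\, \mu_p(x) = \sum_{k \ge 0} p^{-k}\, \delta_{n,2k} = \mathbbm{1}_{2 \mid n}\, p^{-n/2}.
\end{align}

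There is no real obstacle here; the only slightly delicate piece is the closed-form expansion in Step 1, but it is just a short algebraic manipulation of the Chebyshev generating function. The approach is appealing because it bypasses any direct residue or partial-fraction calculation and makes transparent why only even $n$ contribute.
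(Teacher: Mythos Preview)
Your proof is correct and is essentially the same argument as the paper's: both expand the weight $\mu_p/\mu_\infty$ via the Chebyshev generating function evaluated at $t=\pm p^{-1/2}$ and averaged, then invoke the $\mu_\infty$-orthonormality of the $X_n$. Your version is slightly more explicit (justifying the interchange of sum and integral, and deriving the orthogonality via $x=2\cos\theta$), but the strategy and key identity are identical.
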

\begin{proof}
    We make use of the two distributions
    \begin{align}
        \mu_\infty(x) := \frac{1}{\pi}  \lrp{1-x^2/4}^{1/2} dx
        \qquad \text{and} \qquad
        \mu_p(x) := \frac{p+1}{\pi} \frac{(1-x^2/4)^{1/2}}{(p^{1/2} + p^{-1/2})^2-x^2} dx.
    \end{align}
    Now, recall that the generating function for the $X_k(x)$ is given by
    \begin{align}
        \sum_{k = 0}^\infty X_k(x) t^k = \frac{1}{1-tx+t^2}.
    \end{align}
    Substituting $t = p^{-1/2}$ and $t=- p^{-1/2}$ into this identity and summing the two resulting equations then yields
    \begin{align}
        \sum_{k=0}^\infty X_k(x) \mathbbm{1}_{2|k} 2p^{-k/2} &= \frac{1}{1-p^{-1/2}x + p^{-1}} + \frac{1}{1 + p^{-1/2}x + p^{-1}} \\
        &= \frac{2+2p^{-1}}{(1+p^{-1})^2 - p^{-1}x^2} = 2 \frac{p+1}{(p^{1/2} + p^{-1/2})^2 - x^2} = 2 \frac{\mu_p(x)}{\mu_\infty(x)}.
    \end{align}
    Thus
    \begin{align}
        \int_{-2}^2 X_n(x) \mu_p(x) &= \int_{-2}^2 X_n(x) \sum_{k=0}^\infty X_k(x) \mathbbm{1}_{2|k}  p^{-k/2} \mu_\infty(x) \\
        &= \sum_{k=0}^\infty \mathbbm{1}_{2|k}  p^{-k/2}  \int_{-2}^2 X_n(x) X_k(x) \mu_\infty(x) \\
        &= \mathbbm{1}_{2|n}  p^{-n/2},
    \end{align}
    as desired. Note that in the last step, we used the well-known fact that the Chebyshev polynomials are $\mu_\infty$-orthonormal (i.e. that $\int_{-2}^2 X_n(x) X_k(x) \,\mu_\infty(x) = \delta_{n,k}$).
\end{proof}

Next, we evaluate the LHS of \eqref{eqn:suff-cond}.  
Observe that the classical recurrence relation for the $\T_{p^n}$ \cite[Theorem 10.2.9]{cohen-stromberg} is identical to the defining recurrence relation for the Chebyshev polynomials:
\begin{align}
    \T_{p^0} &=\text{Id}, 
    & X_0(x)&=1, \\
    \T_{p^1} &= \T_p, 
    & X_1(x)&=x, \\
    \T_{p^{n+1}} &= \T_p\,\T_{p^n} - \T_{p^{n-1}},  
    & X_{n+1}(x) &= x\, X_n(x) - X_{n-1}(x).
\end{align}
This means that the LHS of \eqref{eqn:suff-cond} is equal to
\begin{align}
    \text{LHS} &= \frac{1}{\dim S_k^\nw(N,\chi)} \sum_{\lambda \in \mathrm{eigv}\, \T_p(N,k,\chi)} X_n(\lambda)  \\
    &= \frac{1}{\dim S_k^\nw(N,\chi)} \sum_{\lambda \in \mathrm{eigv}\, \T_{p^n}(N,k,\chi)} \lambda \\
    &= \frac{\Tr \T^\nw_{p^n}(N,k,\chi) }{\Tr \T_1^\nw(N,k,\chi)}.
\end{align}

Hence, to verify the identity \eqref{eqn:suff-cond}, it suffices to show that
\begin{align}
    \frac{\Tr \T^\nw_{p^n}(N,k,\chi) }{\Tr \T_1^\nw(N,k,\chi)} \longrightarrow \mathbbm{1}_{2|n} p^{-n/2} \quad \text{as} \quad N+k \to \infty. \label{eqn:quot-traces} \tag{$**$}
\end{align}

Asymptotic estimates on these traces were recently computed in \cite{ross} (calculating the growth of the main term) and \cite{cason-et-al} (bounding the error terms). 
In particular, we have the following estimate (where the big-$O$ notation is with respect to both $N$ and $k$).
\begin{lemma}[{\cite[Lemma 6.2]{cason-et-al}, \cite[Equation (6.2)]{ross}}] \label{lemma:trace-estimate}
Let $\omega(N)$ denote the number of distinct prime divisors of $N$ and let $f = \condf(\chi)$. 
Then
\begin{align}
    \Tr \T_m^\nw(N,k,\chi) &= \frac{\mathbbm{1}_{m=\square}}{\sqrt m} \frac{k-1}{12} \psi_{f}^\nw(N) + O(N^{1/2+\varepsilon}), \label{eqn:trace-estimate}
\end{align}
where $\psi_{f}^\nw(N) \ge N / 4^{\omega(N)}$ whenever it is not the case that $2 \mid f,\ 2 \midmid N/f$.
\end{lemma}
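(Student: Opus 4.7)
The plan is to combine the Eichler--Selberg trace formula on the full space $S_k(M,\chi)$ with a Möbius-type inversion over divisors $M \mid N$ to isolate the newspace contribution, and then separately to establish the lower bound on $\psi_f^\nw(N)$ from its multiplicative structure.

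For the first step, I would write down the Eichler--Selberg trace formula for $\Tr T_m(M,k,\chi)$, valid for any $M$ with $f \mid M \mid N$, as a sum of an identity term, an elliptic term, a hyperbolic term, and a parabolic term. The identity term contributes only when $m$ is a perfect square, where it equals $\chi(\sqrt m)\, m^{(k-1)/2}\, \frac{k-1}{12}\, \psi(M)$ with $\psi(M) := [\Sl_2(\ZZ):\Gamma_0(M)]$. The elliptic, hyperbolic, and parabolic contributions can be bounded by $O(M^{1/2+\varepsilon})$ uniformly in $k$ (with only polynomial dependence on $m$), using standard estimates on Hurwitz class numbers and divisor sums as in \cite{cason-et-al}. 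After normalizing by $\chi(m)^{-1/2} m^{-(k-1)/2}$, this yields
\[\Tr \T_m(M,k,\chi) = \frac{\mathbbm{1}_{m=\square}}{\sqrt m}\, \frac{k-1}{12}\, \psi(M) + O(M^{1/2+\varepsilon}).\]

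Next, I would pass to the newspace via Atkin--Lehner--Li theory, which provides the decomposition $S_k(M,\chi) = \bigoplus_{f \mid L \mid M} S_k^\nw(L,\chi)^{\sigma_0(M/L)}$ and hence, after Möbius inversion, a relation
\[\Tr \T_m^\nw(N,k,\chi) = \sum_{\substack{M \mid N \\ f \mid M}} \beta_\chi(N/M)\, \Tr \T_m(M,k,\chi)\]
for an explicit multiplicative arithmetic function $\beta_\chi$. Substituting the trace formula and summing, the main terms assemble into $\frac{\mathbbm{1}_{m=\square}}{\sqrt m}\, \frac{k-1}{12}\, \psi_f^\nw(N)$, where $\psi_f^\nw(N) := \sum_{f \mid M \mid N} \beta_\chi(N/M)\, \psi(M)$, while the error terms combine to $O(N^{1/2+\varepsilon})$ using the divisor bound together with a polynomial bound on $|\beta_\chi|$.

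For the lower bound, I would exploit that $\psi_f^\nw(N)$ is multiplicative and reduce to a local estimate at each prime $q \mid N$. An explicit local computation shows that, in every configuration except $q = 2$ with $2 \mid f$ and $2 \midmid N/f$, the local factor satisfies $\psi_f^\nw(N)_q \ge q^{a_q}/4$ where $q^{a_q} \midmid N$; multiplying over the $\omega(N)$ primes dividing $N$ then yields $\psi_f^\nw(N) \ge N/4^{\omega(N)}$, and in the single excluded local configuration the factor vanishes identically, consistent with $\dim S_k^\nw(N,\chi) = 0$. The main obstacle is the uniform-in-$k$, polynomial-in-$m$ control of the non-identity terms of the trace formula, which is the technical heart of the argument drawn from \cite{cason-et-al}; the multiplicative bookkeeping for $\psi_f^\nw(N)$, though cleaner, still requires a careful case analysis at $q = 2$ to detect the exceptional vanishing.
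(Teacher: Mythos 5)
Your proposal matches the paper's approach: the paper simply cites the trace-formula estimate \eqref{eqn:trace-estimate} directly from \cite[Lemma 6.2]{cason-et-al} (whose internal argument is the Eichler--Selberg plus M\"obius-inversion computation you sketch), and then establishes the lower bound on $\psi_f^\nw(N)$ exactly as you describe, via the multiplicative decomposition $\psi_f^\nw(N) = \psi(f)\cdot\beta{*}\psi_f(N/f)$ and an explicit check that each local factor is at least $1/4$ times the corresponding prime power, except in the single exceptional configuration $p=2$, $2\mid f$, $2\midmid N/f$, where the factor $\beta{*}\psi_f(2)=0$.
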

\begin{proof}
Equation \eqref{eqn:trace-estimate} comes directly from \cite[Lemma 6.2]{cason-et-al}. Here, we give the details from \cite[Equation (6.2)]{ross} regarding the lower bound on $\psi_{f}^\nw(N)$.
This function is defined as $\psi_{f}^\nw(N) := \psi(f) \cdot \beta{*}\psi_f(N/f)$, where 
$\psi$ and $\beta{*}\psi_f$ are the multiplicative functions defined on prime powers via
\begin{align}
    \psi(p^r) &= p^r \lrp{1+\frac1p}, \\
    \beta{*}\psi_f(p^r) &= 
    \begin{cases}
    p^r \lrp{
        1 - \frac{1}{p}
        - \lrbb{ \frac{1}{p^2} }_{r\ge2}
        + \lrbb{ \frac{1}{p^3} }_{r\ge3}
    } & \text{if } p \nmid f, \\
    p^r \lrp{
        1 - \frac{2}{p}
        + \lrbb{ \frac{1}{p^2} }_{r\ge2}
    } & \text{if } p \mid f.
    \end{cases}
\end{align}
(Here, $\lrbb{A}_{\textit{condition}}$ denotes that the term $A$ only appears when $\textit{condition}$ is satisfied.) 

Now, observe that 
\begin{align}
    \lrp{
        1 - \frac{1}{p}
        - \lrbb{ \frac{1}{p^2} }_{r\ge2}
        + \lrbb{ \frac{1}{p^3} }_{r\ge3}
    } &\ge \frac14 \qquad \text{for all prime powers $p^r$}, \\
    \lrp{
        1 - \frac{2}{p}
        + \lrbb{ \frac{1}{p^2} }_{r\ge2}
    } &\ge \frac14 \qquad \text{for all prime powers $p^r \ne 2$}.
\end{align}
Hence assuming it is not the case that case that $2 \mid f,\ 2 \midmid N/f$, we have that
\begin{align}
    \psi_{f}^\nw(N) = \psi(f) \cdot \beta{*}\psi_f(N/f) \ge \psi(f) \frac{N/f}{4^{\omega(N/f)}} \ge f \frac{N/f}{4^{\omega(N)}} = \frac{N}{4^{\omega(N)}}, 
\end{align}
as desired.
\end{proof}

Then applying Lemma \ref{lemma:trace-estimate} at $m=1$ and $m=p^n$ (and using the fact that $4^{\omega(N)} = O(N^\varepsilon)$), we obtain that
\begin{align}
    \frac{\Tr \T^\nw_{p^n}(N,k,\chi) }{\Tr \T_1^\nw(N,k,\chi)} 
    = \frac{\mathbbm{1}_{2|n} p^{-n/2} 
        \frac{k-1}{12} \psi_{\condf(\chi)}^\nw(N) + O(N^{1/2+\varepsilon}) 
    }{
        \frac{k-1}{12} \psi_{\condf(\chi)}^\nw(N) + O(N^{1/2+\varepsilon})  
    }
    \longrightarrow \mathbbm{1}_{2|n} p^{-n/2} \quad \text{as} \quad N+k \to \infty.
\end{align}
This verifies \eqref{eqn:quot-traces}, completing the proof of Theorem \ref{thm:main-thm}.

\section{An Application of the Main Theorem} \label{sec:application}
Finally, we point out one application of Theorem \ref{thm:main-thm}, analogous to \cite[Theorem 5]{serre}.

For newforms $f \in S_k^\nw(N,\chi)$, let $K_f := \QQ(\{a_f(m) : m \ge 1, \ (m,N)=1\})$ denote the coefficient field of $f$. Then for $r \ge 1$, let $s(N,k,\chi)_r$ denote the number of newforms $f \in S_k^\nw(N,\chi)$ such that $[K_f:\QQ] = r$. Then the equidistribution result of Theorem \ref{thm:main-thm} implies the following corollary.

\begin{corollary}
    Fix $p$ prime, $k \ge 2$, and $r \ge 1$. Consider $N$ coprime to $p$ and Dirichlet characters $\chi$ modulo $N$ with $\chi(-1)=(-1)^k$. Then assuming it is not the case that $2 \mid \condf(\chi)$, $2 \midmid N/\condf(\chi)$; 
    \begin{align}
        \frac{s(N,k,\chi)_r}{\dim S_k^\nw(N,k,\chi)} \longrightarrow 0 \qquad \text{as} \qquad N \longrightarrow \infty.
    \end{align}
\end{corollary}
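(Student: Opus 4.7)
The plan is to imitate Serre's argument for \cite[Theorem 5]{serre}. The core idea is that when $[K_f:\QQ] = r$ is held fixed, the normalized Hecke eigenvalue $\widehat a_f(p) \in [-2,2]$ can only take finitely many values, depending on $p$, $k$, and $r$. Since the limiting measure $\mu_p$ from Theorem \ref{thm:main-thm} is absolutely continuous (in particular, atomless) on $[-2,2]$, the fraction of eigenvalues sitting at any fixed finite subset of $[-2,2]$ must go to zero, and this bounds the fraction of newforms whose coefficient field has degree exactly $r$.

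The first step is to exhibit a finite set $S_{p,k,r} \subset [-2,2]$ containing every possible $\widehat a_f(p)$. The Fourier coefficient $a_f(p)$ is an algebraic integer in $K_f$, hence of degree at most $r$ over $\QQ$. Its $\QQ$-Galois conjugates are the $p$-th Fourier coefficients of the Galois-conjugate newforms, and Deligne's bound yields $|\sigma(a_f(p))| \le 2p^{(k-1)/2}$ for every embedding $\sigma$. Thus the minimal polynomial of $a_f(p)$ is a monic integer polynomial of degree at most $r$ with all roots in a fixed complex disk, so its coefficients are bounded, and $a_f(p)$ ranges over a finite set. To pass to $\widehat a_f(p) = \chi(p)^{-1/2} p^{-(k-1)/2} a_f(p)$, I would use the Hecke relation $a_f(p^2) = a_f(p)^2 - \chi(p) p^{k-1}$ to deduce $\chi(p) \in K_f$; being a root of unity in a number field of degree at most $r$, it has order bounded in terms of $r$, so $\chi(p)^{1/2}$ takes only finitely many values. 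Combining gives the desired finite $S_{p,k,r} \subset [-2,2]$.

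For the second step, by strong multiplicity one $\T_p^{\nw}(N,k,\chi)$ is diagonalized by the newforms, so the eigenvalue multiplicity of $\alpha$ equals $\#\{f : \widehat a_f(p) = \alpha\}$. Hence $s(N,k,\chi)_r$ is bounded above by the total multiplicity of $S_{p,k,r}$ in the eigenvalue multiset. For each $\alpha \in S_{p,k,r}$ and each $\delta > 0$, pick a continuous bump $\phi_{\alpha,\delta}$ on $[-2,2]$ with $\phi_{\alpha,\delta}(\alpha) = 1$, $0 \le \phi_{\alpha,\delta} \le 1$, and support in $[\alpha-\delta,\alpha+\delta]$. Applying Theorem \ref{thm:main-thm} to $\phi_{\alpha,\delta}$ then yields
\[
    \limsup_{N \to \infty}\, \frac{\#\{\lambda \in \eigv \T_p^{\nw}(N,k,\chi) : \lambda = \alpha\}}{\dim S_k^{\nw}(N,\chi)} \;\le\; \int_{-2}^2 \phi_{\alpha,\delta}(x)\,\mu_p(x) \;\le\; \mu_p([\alpha-\delta,\alpha+\delta]),
\]
which tends to $0$ as $\delta \to 0$ since $\mu_p$ is atomless. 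Summing over the finitely many $\alpha \in S_{p,k,r}$ gives $s(N,k,\chi)_r/\dim S_k^{\nw}(N,\chi) \to 0$.

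The main subtlety is the normalization step: passing from finiteness for $a_f(p)$ to finiteness for $\widehat a_f(p)$. The factor $\chi(p)^{-1/2}$ depends on the varying character $\chi$ and a priori need not lie in any fixed extension. The resolution is that the Hecke relation forces $\chi(p)$ into $K_f$, after which the bound $[K_f:\QQ] \le r$ restricts $\chi(p)$ to a set of roots of unity of uniformly bounded order. Once this is in place, the rest is a standard Northcott-style finiteness argument followed by a direct application of the weak convergence provided by Theorem \ref{thm:main-thm}.
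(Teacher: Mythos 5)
Your proof is correct and follows the same route as the paper, which simply defers to Serre's Theorem 5: bound the degree and the archimedean size of $a_f(p)$, invoke Northcott-style finiteness, observe that the limiting measure $\mu_p$ is atomless, and conclude via the bump-function argument applied to the weak convergence from Theorem \ref{thm:main-thm}. The one place where you go beyond the paper's one-sentence sketch is the normalization step: the paper writes that ``the set of possible $T_m$ eigenvalues with degree $\le r$ is finite, so the set of normalized eigenvalues is $\mu_p$-measure $0$,'' but in the nebentypus setting the normalized eigenvalue is $\chi(p)^{-1/2}p^{-(k-1)/2}a_f(p)$, and the factor $\chi(p)^{-1/2}$ varies with $\chi$, so finiteness of the set of $a_f(p)$'s does not by itself give finiteness of the normalized set. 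Your observation that the Hecke relation $a_f(p^2)=a_f(p)^2-\chi(p)p^{k-1}$ forces $\chi(p)\in K_f$, and that a root of unity in a number field of degree at most $r$ has order bounded in terms of $r$, is exactly the extra ingredient needed to restore finiteness; this is a genuine (if minor) gap in the paper's sketch that your write-up closes cleanly.
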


Recall that the Fourier coefficients $a_f(m)$ correspond with the eigenvalues of $T_m$.  
Then the above corollary follows from the fact that the set of possible $T_m$ eigenvalues with degree $\le r$ is finite, so the set of normalized eigenvalues is $\mu_p$-measure $0$ in $[-2,2]$. The interested reader can find the precise details of this proof in \cite[Theorem 5]{serre}.

\bibliographystyle{plain}
\bibliography{bibliography.bib}

\end{document}